\newtheorem{theorem}{Theorem}[section]
\newtheorem{corollary}{Corollary}
\newtheorem{lemma}[theorem]{Lemma}
\newtheorem{proposition}[theorem]{Proposition}
\newtheorem{definition}[theorem]{Definition}
\newtheorem{remark}{Remark}
\newtheorem{example}{Example}
\newcommand{\R}{\mathbb{R}}
\newcommand{\diag}{\text{diag}}
\newcommand{\1}{\mathbb{1}}
\newcommand{\B}{{\cal{B}}}
\newcommand{\A}{{\mathcal{A}}}
\newcommand{\KL}{\mathcal{K}^L}
\newcommand{\Ko}{\mathcal{K}^0}
\newcommand{\Too}{T_{1,1}}
\newcommand{\Tot}{T_{1,2}}
\newcommand{\Tto}{T_{2,1}}
\newcommand{\Ttt}{T_{2,2}}
\newcommand{\Aoo}{\A_{1,1}}
\newcommand{\Aot}{\A_{1,2}}
\newcommand{\Ato}{\A_{2,1}}
\newcommand{\Att}{\A_{2,2}}
\newcommand{\Boo}{\B_{1,1}}
\newcommand{\Bot}{\B_{1,2}}
\newcommand{\Bto}{\B_{2,1}}
\newcommand{\Btt}{\B_{2,2}}
\newcommand{\Koo}{\Ko_{1,1}}
\newcommand{\Kot}{\Ko_{1,2}}
\newcommand{\Kto}{\Ko_{2,1}}
\newcommand{\Ktt}{\Ko_{2,2}}
\newcommand{\KLoo}{\KL_{1,1}}
\newcommand{\KLot}{\KL_{1,2}}
\newcommand{\KLto}{\KL_{2,1}}
\newcommand{\KLtt}{\KL_{2,2}}
\newlength{\fwidth}
\begin{document}

\title{Boundary Stabilization with restricted observability
 }

\author{Mapundi Kondwani Banda\footnote{Department of Mathematics and Applied Mathematics, University of Pretoria, Private Bag X20, Hatfield 0028, Republic of South Africa, \texttt{mapundi.banda@up.ac.za}}, Jan Friedrich\footnote{Chair of Numerical Analysis, Institute for Geometry and Applied Mathematics, RWTH Aachen University, Templergraben 55, 52056 Aachen, Germany  \texttt{\{friedrich, herty\}@igpm.rwth-aachen.de}}, Michael Herty\footnotemark[2]}
%\author{Mapundi Kondwani Banda\footnotemark[1], Jan Friedrich\footnotemark[2], Michael Herty\footnotemark[2]}
%\address{\footnotemark[1] {Department of Mathematics and Applied Mathematics, University of Pretoria, Private Bag X20, Hatfield 0028, Republic of South Africa}\newline \footnotemark[2]{Chair of Numerical Analysis, Institute for Geometry and Applied Mathematics, RWTH Aachen University, Templergraben 55, 52056 Aachen, Germany  \texttt{\{friedrich, herty\}@igpm.rwth-aachen.de}}}
\date{\today}                                          
% Activate to display a given date or no date
%\maketitle
\maketitle
\begin{abstract}
Lyapunov functions are popularly used to investigate the stabilization problem of  systems of hyperbolic conservation laws with boundary controls.
In real life applications often not every boundary value can be observed.
In this work, we show the stabilization under a restricted boundary observability. 
Thereby, we apply the boundary control directly on the observed (physical) variables. 
Using well-known stabilization results from the literature, we also discuss examples such as a density flow model or the Saint-Venant equations. This shows that a restricted observation can result in more restrictive control choices or can prevent the system from stabilizing.
\end{abstract}
\medskip
%\keywords{
\noindent \textbf{Keywords:} Boundary stabilization, restricted observability, Lyapunov stabilization, hyperbolic systems of conservation laws
%}
\medskip
\section{Introduction}
Hyperbolic systems of partial differential equations of balance laws are used to model many physical phenomena with finite speed of propagation.
They cover a wide range of applications, such as traffic flow, Maxwell's equations, Euler's equations, or Saint-Venant's equations, to name a few.
Of particular practical relevance is the boundary control of such systems. 
For example, the water flow of an open channel can be controlled by gates at both ends of the channel.
A suitable control can stabilize a system in an efficient manner. The most powerful property of these models is the existence of the so-called Riemann coordinates, which enable the proof of classical solutions as well as control. 
Stabilization to a steady state has been studied by several authors in the last decade, e.g. \cite{bastin2016stability,coron2008dissipative,banda2020numerical,bastin2008using,bastin2017quadratic,hayat2019quadratic,hayat2021exponential,hayat2021boundary,diagne2011lyapunov} and the references therein.
The desired feedback laws can be derived with the help of suitable Lyapunov functions such that the systems can stabilize exponentially in time in a suitable norm.

Most of the works study the physical system by performing a coordinate change to so-called Riemmann coordinates or directly in Riemann coordinates \cite{bastin2016stability,coron2008dissipative,banda2020numerical,bastin2008using,diagne2011lyapunov}.
Only a few deal with control laws in physical variables, and study them mostly for specific examples of systems, the reader may refer to \cite{bastin2017quadratic,hayat2019quadratic,hayat2021exponential} for examples. 
However, for practitioners, control laws in physical variables are the most relevant, since these are the variables they actually observe.
Obviously, with a further coordinate change control laws in Riemann coordinates can be transformed back to feedback laws in physical variables. The interest in boundary observation is due to the fact that distributed parameter systems are usually not available \cite{Castillo2013observer}. Sensors are usually installed at the boundaries.  In \cite{Castillo2013observer} sufficient conditions for infinite dimensional boundary observer design in the presence of static or dynamic boundary control were demonstrated. The asymptotic convergence of the estimation error is demonstrated by employing the Lyapunov function. In \cite{li2008observability} a direct and constructive approach based on the semi-global $C^1$ solution is used to establish the local exact boundary observability for one-dimensional first-order quasilinear hyperbolic systems with general nonlinear boundary conditions. Other presentations include \cite{li2011constructive,Li2008note,liu2019application,kitsos2021observer}. In \cite{gugat2023observer} a practical example on a networked domain is used to prove that the observer system converges to the original system state exponentially fast in the $L^2$-norm if the measurements are exact.
However, in practice not all variables may be measurable, such that the resulting control laws from Riemann coordinates might not be attainable. This problem was also highlighted in \cite{Li2008note} for one-side exact boundary observability. In this case a constructive method was used to demonstrate that a suitable coupling of the variables in the quasilinear hyperbolic system itself was needed for observability. In \cite{som2015application}, state estimation is used to estimate a fourth boundary variable in the case where three variables are known for nonlinear coupled distributed hyperbolic equations for non-conservative laws in a pressurized water pipe model. A Lipschitz property and a Lyapunov function are used to prove the exponential stability of the estimation error.
Therefore, it is of particular interest to investigate, in general, if it is still possible to derive a control law if boundary observability is restricted due to limited availability of boundary information.

We will study the effect of missing information of the physical boundary values in one space dimension by still going to Riemann coordinates for stabilization.
Therefore, in section \ref{sec:PrelResults}, we recall some important results from the literature and then investigate the question of stability or stabilization in more detail.
In particular, we will consider the case of linear controls of rank one, local controls, and consider as examples a density flow system {and the Saint-Venant equations}.

\section{Preliminary Results} \label{sec:PrelResults}
We will consider a system of hyperbolic linear conservation laws in one space dimension. Such systems can, for example, be obtained after linearizing a nonlinear system around a suitable steady state.
The evolution of the physical or so-called state variables $Y:\R_+\times\R\to \R^n$ is described by the equation
\begin{align}\label{eq:state_sys}
\partial_t Y(t,x) + M\partial_x Y(t,x)=0,\quad &t>0,\ x\in [0,L];\\
Y(0,x)=Y_0(x),\quad &x\in [0,L].\notag
\end{align}
Here $Y_0$ is a suitable initial condition, $x$ and $t$ are the space and time coordinates, respectively, and $M\in \mathbb{R}^{n\times n}$ is diagonalizable such that there exists a matrix $T\in\R^{n\times n}$ with $T^{-1}M T=\Lambda$ where $\Lambda$ is a diagonal matrix with the non-zero real eigenvalues of $M$ as its entries.
The eigenvalues are assumed to be distinct and ordered as $\lambda_1>\dots>\lambda_m>0>\lambda_{m+1}>\dots>\lambda_n$.
Hence, the system is strictly hyperbolic.
For a well-posed system on $[0,1]$, $n$ boundary conditions need to be specified.
In many physical examples the boundary conditions for \eqref{eq:state_sys} (after a possible linearization) can be formally described by 
\begin{equation}\label{eq:physicalbcU}
\A Y(t,0) + \B Y(t,L)= U(Y(t,0),Y(t,L)),
\end{equation}
where $\A \in\mathbb{R}^{n\times n}$ and $\B \in\mathbb{R}^{n\times n}$ contain the physical constraints on the boundaries and $U: \mathbb{R}^{n}\times \R^{n}\to \R^n$ is a possible nonlinear control law.
The system \eqref{eq:state_sys} can be rewritten using Riemann coordinates $R:\R_+\times\R\to \R^n$, where $R(t,x)=T^{-1}Y(t,x)$, in the form
\begin{align}\label{eq:Riemann_sys}
\partial_t R(t,x) + \Lambda\partial_x R(t,x)=0,&\quad t>0,\ x\in [0,L];\\
R(0,x)=R_0(x),&\quad x\in [0,L].
\end{align}
With $R^+\in\R^m$ and $R^-\in\R^{n-m}$, we denote the entries with positive and negative eigenvalues, respectively.
To close the system of equations \eqref{eq:Riemann_sys} on $[0, L]$, boundary conditions are also necessary.
The only boundary conditions that one can impose are those corresponding to information entering the domain.
Thus at $x = 0$ the boundary conditions must be imposed for $R^+$
while at $x = L$ boundary conditions are imposed for $R^-$.
Consequently, the information leaving the domain through the boundaries is the one that we are able to measure.
The boundary conditions \eqref{eq:physicalbcU} are transformed (after linearization of the control $U$) into Riemann coordinates in such a way that we consider the following boundary values:
\begin{equation}\label{eq:bc}
\begin{pmatrix}
R^+(t,0)\\R^-(t,L)
\end{pmatrix}=K
\begin{pmatrix}
R^+(t,L)\\R^-(t,0)
\end{pmatrix}.
\end{equation}
The explicit expression of $K$ depends on 
%the transformation matrix
$T$ as well as the linearized control $U$ and the physical boundary conditions $\A$ and $\B$.
Later, we will give an explicit expression for $K$.
For every initial condition, $R_0\in L^2((0,L);\R^n)$, the existence and uniqueness of an $L^2$-solution in the sense of \cite[Definition A.3]{bastin2016stability} to \eqref{eq:Riemann_sys}--\eqref{eq:bc} is guaranteed by \cite[Theorem A.4]{bastin2016stability}.

The stabilization of \eqref{eq:state_sys} around a zero steady state in most of the literature \cite{bastin2016stability,coron2008dissipative,banda2020numerical,bastin2008using,diagne2011lyapunov} is obtained by studying the system \eqref{eq:Riemann_sys} and the boundary conditions \eqref{eq:bc}.
This results in imposing sufficient conditions on the matrix $K$.
Stability is understood in the following sense:
\begin{definition}\label{def:expstable}
The linear system \eqref{eq:Riemann_sys}--\eqref{eq:bc} is called
\begin{enumerate}
\setlength{\itemsep}{0em}
    \item[(i)] exponentially stable in the $L^2$ norm if there exist $\nu>0$ and $C>0$ such that for every initial condition $R_0\in L^2((0,L);\R^n)$, the $L^2$-solution to the Cauchy problem \eqref{eq:Riemann_sys}--\eqref{eq:bc} satisfies
     \[ \Vert R(\cdot,t)\Vert_{L^2((0,L);\R^n)}\leq C\exp{(-\nu t)}\Vert R_0\Vert_{L^2((0,L);\R^n)},\quad \forall\ t\in[0,+\infty).\]
\item[(ii)] robustly exponentially stable if there exists $\varepsilon>0$ such that the perturbed system
    \[\partial_t R(t,x)+ \tilde \Lambda \partial_x R(t,x)=0,\]
    is exponentially stable for every diagonal matrix $\tilde \Lambda$ with $|\tilde \lambda_i-\lambda_i|\leq \varepsilon\ \forall\ i\in\{1,\dots,n\}$  where $\lambda_i$ are diagonal components of matrix $\Lambda$ in \eqref{eq:Riemann_sys}.
\end{enumerate}
\end{definition}
% \begin{definition}\label{def:expstable}
%     The linear system \eqref{eq:Riemann_sys} with boundary conditions \eqref{eq:bc} is called exponentially stable in the $L^2$ norm if there exist $\nu>0$ and $C>0$ such that for every initial condition $R_0\in L^2((0,L);\R^n)$, the $L^2$-solution to the Cauchy problem \eqref{eq:Riemann_sys}--\eqref{eq:bc} satisfies
%     \[ \Vert R(\cdot,t)\Vert_{L^2((0,L);\R^n)}\leq C\exp{(-\nu t)}\Vert R_0\Vert_{L^2((0,L);\R^n)},\quad \forall\ t\in[0,+\infty).\]
% \end{definition}
\noindent Thus, exponential stability means that the solution $R(t,x)$ converges to zero exponentially in time.
Furthermore, if a system is robustly exponentially stable, it is exponentially stable.\\
To guarantee exponential stability the following function $\rho_1(K)$ has been introduced in \cite{coron2008dissipative}
\begin{align*}
\rho_1(K):=\inf \{ \Vert \Delta K \Delta^{-1}\Vert ; \Delta \in \mathcal{D}_{n,+} \},
\end{align*}
where $\Vert K\Vert$ is the matrix norm induced by the Euclidean norm and $\mathcal{D}_{n,+}$ denotes the set of diagonal $n\times n$ real matrices with strictly positive diagonal entries.
This results in a sufficient condition for exponential stability:
\begin{theorem}(The reader is referred to \cite[Theorem 3.2]{bastin2016stability})\label{thm:exp}
    The system \eqref{eq:Riemann_sys}--\eqref{eq:bc} is exponentially stable in the $L^2$ norm if $\rho_1(K)<1$.
\end{theorem}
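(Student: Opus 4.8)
The plan is to exhibit an explicit weighted quadratic Lyapunov functional and to show that the condition $\rho_1(K)<1$ makes it strictly dissipative. Concretely, I would start from the family of candidates
\[
V(t)=\int_0^L \sum_{i=1}^n p_i\, R_i^2(t,x)\, e^{-\mu\,\mathrm{sgn}(\lambda_i)\,x}\,dx,
\]
with weights $p_i>0$ and a decay rate $\mu>0$ to be fixed. Since each $p_i>0$ and every exponential factor is bounded between $e^{-\mu L}$ and $e^{\mu L}$, such a $V$ is equivalent to $\Vert R(\cdot,t)\Vert_{L^2}^2$, so exponential decay of $V$ will immediately yield the claimed estimate. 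The sign choice $e^{-\mu\,\mathrm{sgn}(\lambda_i)x}$ is the one that makes the weight decrease along each characteristic direction.

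The choice of weights is dictated by $\rho_1(K)<1$: because the infimum defining $\rho_1(K)$ is strictly below $1$, there is a diagonal matrix $\Delta=\mathrm{diag}(\delta_1,\dots,\delta_n)\in\mathcal{D}_{n,+}$ with $\rho:=\Vert\Delta K\Delta^{-1}\Vert<1$. I would then set $p_i=\delta_i^2/|\lambda_i|$. Differentiating $V$ along solutions of \eqref{eq:Riemann_sys}, using $\partial_t R=-\Lambda\partial_x R$ and integrating by parts in $x$, splits $\dot V$ into an interior (dissipation) term and a boundary term. With the above sign choice the interior term equals $-\mu\int_0^L\sum_i p_i|\lambda_i|R_i^2 e^{-\mu\,\mathrm{sgn}(\lambda_i)x}\,dx\le -\mu\,(\min_i|\lambda_i|)\,V$, which supplies the exponential rate.

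The heart of the argument, and the step I expect to be the main obstacle, is to show that the boundary term is nonpositive. Collecting the outgoing variables into $\xi=(R^+(t,L),R^-(t,0))$ and using the boundary relation \eqref{eq:bc}, the incoming variables are $K\xi$, so the boundary term can be written as $\xi^\top M(\mu)\,\xi$ for a symmetric matrix $M(\mu)$ depending continuously on $\mu$. At $\mu=0$ the exponential weights are trivial and, by the choice $p_i=\delta_i^2/|\lambda_i|$, one finds $M(0)=K^\top\Delta^2 K-\Delta^2$. The identity $\Vert\Delta K\Delta^{-1}\Vert=\rho$ gives $K^\top\Delta^2 K\preceq\rho^2\Delta^2$, hence $M(0)\preceq(\rho^2-1)\Delta^2\prec 0$; this is exactly where $\rho_1(K)<1$ enters, and it is essential that the inequality be \emph{strict}. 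Indeed, the exponential factors perturb $M$ in the unfavourable direction (they can only enlarge the boundary term), so there is no room in the borderline case $\rho=1$; but since $M(0)$ is negative definite, continuity in $\mu$ guarantees $M(\mu)\preceq 0$ for all sufficiently small $\mu>0$. Fixing such a $\mu$ yields $\dot V\le-\mu(\min_i|\lambda_i|)\,V$, whence $V(t)\le e^{-\mu(\min_i|\lambda_i|)t}V(0)$ and, by norm equivalence, the exponential $L^2$ bound with $\nu=\tfrac12\mu\min_i|\lambda_i|$.

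Finally, the computation above is formal in that it manipulates derivatives of $R$; to justify it for arbitrary $L^2$ initial data I would first carry it out for smooth solutions and then pass to the limit using the well-posedness and density results referenced after \eqref{eq:bc} (equivalently, reading the estimate at the level of the solution semigroup). The genuinely delicate point remains the strict definiteness of $M(0)$ together with the continuity/perturbation argument in $\mu$; everything else is bookkeeping with the integration by parts and the splitting into incoming and outgoing characteristics.
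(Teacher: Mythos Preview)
The paper does not give its own proof of this theorem; it simply quotes \cite[Theorem~3.2]{bastin2016stability} and uses the result as a black box. Your proposal, however, is precisely the Lyapunov argument that Bastin and Coron use to prove that theorem in the cited reference: the weighted quadratic functional with exponential weights along characteristics, the choice $p_i=\delta_i^2/|\lambda_i|$ induced by a near-optimal $\Delta$ in the definition of $\rho_1(K)$, the splitting of $\dot V$ into a strictly dissipative interior term and a boundary quadratic form, the identification $M(0)=K^\top\Delta^2K-\Delta^2\prec 0$ via $\|\Delta K\Delta^{-1}\|<1$, and the continuity-in-$\mu$ perturbation to absorb the exponential factors. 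The density/semigroup remark to pass from smooth to $L^2$ data is also standard in that setting. So your approach is correct and is exactly the route taken in the source the paper cites; there is nothing to compare against in the present paper itself.
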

\noindent For robust exponential stability, Silkowski \cite{silkowski1978star} investigated
\[ \rho_0(K):=\max \{\rho(\text{diag}(e^{i\theta_1},\dots,e^{i\theta_n})K); (\theta_1,\dots,\theta_n)^T\in\mathbb{R}^n\}.
\]
where $\rho$ is the spectral radius. 
Note that for $K\in \R^{n\times n}$, the inequality
        \[\rho_0(K)\leq \rho_1(K)\]
holds, see \cite{coron2008dissipative}.
Further, by \cite[Theorem 3.12]{bastin2016stability} $\rho_0(K)=\rho_1(K)$ holds for $K\in\R^{n\times n}$ and $n\leq 5$. 
In particular, the following corollary gives a necessary and sufficient condition for robust exponential stabilization:
\begin{corollary}\label{cor:rho0}(the reader is referred to \cite[Corollarly 3.10]{bastin2016stability}) The system \eqref{eq:Riemann_sys}--\eqref{eq:bc} is robustly exponentially stable with respect to the characteristic velocities if and only if $\rho_0(K)<1$.
\end{corollary}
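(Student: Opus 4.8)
The plan is to reduce the partial differential equation to an equivalent difference equation in the outgoing Riemann variables and then to characterize robust stability through the location of the roots of the associated characteristic equation. First I would invoke the method of characteristics: each component $R_i^+$ travels to the right and each $R_j^-$ to the left, each incurring the delay $\tau_i = L/|\lambda_i|$. Writing the vector of outgoing (measurable) boundary traces as
\[
y(t) := \begin{pmatrix} R^+(t,L) \\ R^-(t,0) \end{pmatrix},
\]
and inserting the boundary coupling \eqref{eq:bc}, one finds that $y$ satisfies the linear difference equation
\[
y_i(t) = \sum_{j=1}^n K_{ij}\, y_j(t - \tau_i), \qquad i = 1, \dots, n.
\]
A standard semigroup formulation on the space of boundary histories shows that $L^2$ exponential stability of \eqref{eq:Riemann_sys}--\eqref{eq:bc} in the sense of Definition \ref{def:expstable} is equivalent to exponential stability of this difference equation, and, crucially, that perturbing the characteristic velocities $\lambda_i$ amounts precisely to perturbing the delays $\tau_i$.

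Next I would pass to the characteristic equation. Substituting the ansatz $y(t) = e^{st} v$ yields the condition
\[
\det\!\left( I - \diag\!\left(e^{-s\tau_1}, \dots, e^{-s\tau_n}\right) K \right) = 0,
\]
and exponential stability is equivalent to all roots $s$ lying in a half-plane $\operatorname{Re}(s) \le -\delta$ for some $\delta > 0$. The decisive case is the imaginary axis $s = i\omega$, on which the factors $e^{-s\tau_j} = e^{-i\omega\tau_j}$ are pure phases; a marginal root therefore exists exactly when $\diag(e^{-i\omega\tau_1}, \dots, e^{-i\omega\tau_n}) K$ carries an eigenvalue on the unit circle, i.e.\ when its spectral radius attains $1$.

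Finally I would connect this to $\rho_0(K)$ by an equidistribution argument. For the \emph{robust} notion the delays $\tau_j$ range over a neighborhood and may, by an arbitrarily small perturbation, be taken rationally independent; Kronecker's theorem then makes the attainable phase vectors $(\omega\tau_1, \dots, \omega\tau_n) \bmod 2\pi$ dense in the torus $\R^n / (2\pi\mathbb{Z})^n$ as $\omega$ ranges over $\R$. Hence the matrices $\diag(e^{i\theta_1}, \dots, e^{i\theta_n}) K$ realized along perturbed imaginary-axis roots fill out exactly the family appearing in the definition of $\rho_0(K)$. Robust exponential stability therefore holds if and only if every such matrix has spectral radius strictly below $1$, which is precisely $\rho_0(K) < 1$: if $\rho_0(K) < 1$ no root can reach the imaginary axis under any admissible delay perturbation, yielding a uniform gap $\delta$; while if $\rho_0(K) \ge 1$ the density argument produces perturbed velocities and a frequency $\omega$ for which the characteristic equation acquires a root with nonnegative real part, destroying stability.

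The step I expect to be the main obstacle is the passage from the ``for all phases'' condition to genuine robustness over velocities: one must verify that arbitrarily small perturbations of the $\lambda_i$ simultaneously render the delays rationally independent and push the equidistributed phases close to any prescribed critical phase vector, all while controlling the uniformity of the decay rate $\delta$. Making this equidistribution quantitative, and reconciling it with the equivalence between the difference-equation and $L^2$-PDE stability norms, is the technical heart of the argument; once Silkowski's characterization of $\rho_0$ is available, the remaining implications are routine.
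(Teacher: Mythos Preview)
The paper does not prove this corollary at all: it is quoted verbatim as a preliminary result from the literature, with the reader referred to \cite[Corollary 3.10]{bastin2016stability} for the argument. Your sketch is essentially the classical proof that appears there (and ultimately goes back to Silkowski \cite{silkowski1978star}): reduce via the method of characteristics to a neutral difference equation with delays $\tau_i = L/|\lambda_i|$, analyze the characteristic determinant $\det\!\bigl(I - \diag(e^{-s\tau_1},\dots,e^{-s\tau_n})K\bigr)$, and use rational independence of perturbed delays together with Kronecker's density theorem on the torus to identify the robust stability boundary with $\rho_0(K)=1$. So there is nothing to compare against in the present paper, and your outline matches the argument in the cited source.

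Your identification of the delicate point is accurate: the passage from ``density of phases'' to ``existence of an actual unstable root for some nearby delay vector'' requires more than Kronecker alone, since density on the imaginary axis does not automatically produce a root with $\operatorname{Re}(s)\ge 0$. In the cited reference this is handled via a continuity/Rouch\'e argument on the characteristic function together with the Hale--Henry theorem on the continuous dependence of the spectral abscissa of difference equations on the delays, which is precisely the machinery you would need to complete the step you flagged as the main obstacle.
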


\section{Restricted Observability}
Our goal is to consider the case of limited boundary information about the system, i.e., we are not able to observe the boundary values of all physical variables.
This situation occurs in several applications, because not all variables are or can be measured.
Hence, these variables cannot be used in the control, $U$.
It is important to mention that in applications the variables observed (or not) are the physical variables, $Y$, and not the Riemann invariants, $R$.
Therefore, we have to start from the physical boundary conditions and transform them into a similar setting as in \eqref{eq:bc}.
This gives sufficient conditions for the control in the physical variables.

\subsection{Boundary transformation}
We start by transforming the boundary conditions \eqref{eq:physicalbcU} into Riemann coordinates. 
We assume that after linearization these boundary conditions are well-defined and can be expressed as follows:
\begin{definition}(Boundary conditions) The boundary conditions are given by
\begin{equation}\label{eq:physicalbc}
\A Y(t,0) + \B Y(t,L)= \Ko Y(t,0) +\KL Y(t,L).
\end{equation}
with  $\A$, $\B$, $\Ko$, $\KL\in \R^{n\times n}$. The matrices $\A$ and $\B$ represent the physical boundary constraints. The matrices $\Ko$ and $\KL$ are control matrices which contain zero columns for the variables that are not observable.
\end{definition}
The tuning parameters in $\Ko$ and $\KL$ are used to drive the system to its equilibrium.
Here, we make the reasonable assumption that, if the value of a variable cannot be observed, it cannot be used as a control input, either.
On the contrary, the unobserved variables may still be present on the left side of \eqref{eq:physicalbc}, since the physical constraints at the boundaries can be satisfied without observing the variables.
For example, open water channels can be controlled by underflow or overflow gates.
Raising or lowering these gates has a direct effect on the velocity and water depth due to physical laws.
The values do not need to be known.

To use the established stability results, we transform the boundary conditions \eqref{eq:physicalbc} using $Y(t,x)=TR(t,x)$ using 
\begin{equation}\label{eq:bcRiemanngeneral}
 (\A-\Ko) T R(t,0) =(\KL-\B) T R(t,L).
\end{equation}
Since we assume that the eigenvalues are ordered as $\lambda_1>\dots>\lambda_{m}>0>\lambda_{m+1}>\dots>\lambda_{n}$, we can divide the matrix $T$ into the following block matrices
\[T=\begin{pmatrix}
\Too & \Tot\\ \Tto & \Ttt
\end{pmatrix}\]
with $\Too\in\R^{m\times m},\ \Tot\in\R^{m\times (n-m)},\ \Tto\in \R^{(n-m)\times m}$ and $\Ttt\in \R^{(n-m)\times (n-m)}$.
We divide the matrices $\A,\ \B, \Ko, \KL$ in a similar fashion.\\
Note that the boundary conditions \eqref{eq:physicalbc} in physical variables are well-defined if and only if the boundary conditions \eqref{eq:bcRiemanngeneral} can be solved with respect to $R^+(t,0)$ and $R^-(t,L)$, i.e. 
\begin{equation}\label{eq:bc:transformed}
\begin{aligned}&
\begin{pmatrix}
R^+(t,0) \\
R^-(t,L)
\end{pmatrix}
= \mathcal{C}^{-1} \mathcal{D}%\begin{pmatrix}
\begin{pmatrix}
R^+(t,L) \\
R^-(t,0)
\end{pmatrix}
\end{aligned}
\end{equation}
with the entries of $\mathcal{C}$ and $\mathcal{D}$ given by
\begin{align*}
 &   \mathcal{C}_{1,1}=(\Aoo-\Koo)\Too+(\Aot-\Kot)\Tto & \mathcal{C}_{1,2}=(\Boo-\KLoo)\Tot+(\KLot-\Bot)\Ttt\\
&\mathcal{C}_{2,1}=(\Ato-\Kto)\Too+(\Att-\Ktt)\Tto & \mathcal{C}_{2,2}=(\Bto-\KLto)\Tot+(\KLtt-\Btt)\Ttt\\
&\mathcal{D}_{1,1}=(\KLoo-\Boo)\Too+(\Bot-\KLot)\Tto & \mathcal{D}_{1,2}=(\Koo-\Aoo)\Tot+(\Kot-\Aot)\Ttt\\
&\mathcal{D}_{2,1}=(\KLto-\Bto)\Too+(\Btt-\KLtt)\Tto & \mathcal{D}_{2,2}=(\Kto-\Ato)\Tot+(\Ktt-\Att)\Ttt
\end{align*}
For stability, we need to check that $K=\mathcal{C}^{-1} \mathcal{D}$ fulfills the aforementioned stability conditions.
In the following, we will study this for specific structures of the boundary conditions.

\subsection{One velocity direction and $K$ of rank one}
We will illustrate this mechanism for a system with only positive eigenvalues.
Negative eigenvalues can be treated in a similar manner.
Since the velocities are all positive, it is reasonable to assume that the physical boundary conditions involve constraints only at the left boundary and that the control depends on the flow of the system at $L$ and the state at $0$.
Therefore, $\B$ is a zero matrix, and the boundary conditions are
$$ (\A-\Ko) Y(t,0)=\KL Y(t,L).$$ 
Recall that the weight matrices $\KL$ and $\Ko$ contain zero columns for the unobserved variables.
Let us further assume that the boundary conditions above are well-posed and hence, $\A-\Ko$ is invertible.
Then we have the following constraints in Riemann coordinates:
\begin{equation}\label{eq:bc_res_pos}
R^+(t,0)=((\A-\Ko) T)^{-1}\KL TR^+(t,L).
\end{equation}
and robust exponential stability is given if and only if
\begin{equation}\label{eq:cond_pos}
\rho_0(((\A-\Ko)T)^{-1}\KL T)<1.
\end{equation}
Obviously, if $\KL$ is a zero matrix, exponential stability is guaranteed.\\
In the following, we consider the special case in which the control matrix $\KL$ is of rank one. 
In such a case we can write $\KL=uv^T$ for some vectors $u,v\in \R^n$.
Note that in general, a rank one control might be a questionable choice for a control matrix, since it is a rather simple control.
However, the specific case of observing only one variable at $x=L$ is expressed by a rank one matrix $\KL$, which makes such matrices of particular interest in the restricted observability setting.
In particular, for a $2\times 2$-system a control matrix of rank one covers the possible cases.
\begin{example}
    Two examples of control matrices of rank one are the following:
    \begin{itemize}
        \item We observe a certain set of variables, but use a uniform weight across all observed variables as the inflow for each physical variable, but for the physical variables these weights may differ, i.e.
        \[u=\begin{pmatrix}
            k_1\\ \vdots \\k_n
        \end{pmatrix},\quad v=(v_i)_{i=1,\dots,n}\text{ with }v_i=\begin{cases}
            1,\quad &\text{if variable $i$ is observed}\\
            0& \text{else}
        \end{cases}\]
        and $\KL=uv^T$. Hence, the entries in the row $i$ of $\KL$ are either $k_i$ or 0.\\
        In particular, the case that we only observe one variable is contained in this setting.
        \item  On the contrary, we can observe a given set of variables and use different weights for each observed variable that are uniform across the physical variables, i.e.
                \[u=\begin{pmatrix}
            1\\ \vdots \\1
        \end{pmatrix},\quad v=(v_i)_{i=1,\dots,n}\text{ with }v_i=\begin{cases}
            k_i,\quad &\text{if variable $i$ is observed}\\
            0& \text{else}
        \end{cases}\]
        and $\KL=uv^T$. Hence, the column $i$ of $\KL$ has either only the same entries $k_i$ or is a zero column.
    \end{itemize}
\end{example}
Another advantage of a rank one matrix is that the necessary and sufficient condition \eqref{eq:cond_pos} can be computed explicitly:
we get the following proposition:
\begin{lemma}\label{lem:1}
    Let $K\in\R^{n\times n}$ with $K=uv^T$ be a matrix of rank one with $u,v$ nonzero column vectors. Then we have
    \begin{equation}
        \rho_0(K)=\rho_1(K)=|v|^T|u|.
    \end{equation}
\end{lemma}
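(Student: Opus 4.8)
The plan is to sandwich the common value between $\rho_0(K)$ from below and $\rho_1(K)$ from above, using the already-stated inequality $\rho_0(K)\le\rho_1(K)$ together with two elementary facts about a rank-one matrix $ab^T$ (with $a,b$ column vectors): its spectral radius equals $\rho(ab^T)=|b^Ta|$, since $ab^T$ has a single possibly-nonzero eigenvalue equal to its trace $b^Ta$; and its Euclidean-induced operator norm equals $\|ab^T\|=\|a\|_2\|b\|_2$, because $ab^Tx=(b^Tx)a$ is maximized over unit $x$ by aligning $x$ with $b$. Writing $s:=|v|^T|u|=\sum_{i=1}^n|u_i||v_i|$ for the target quantity, it then suffices to prove $\rho_0(K)\ge s$ and $\rho_1(K)\le s$.

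For the lower bound on $\rho_0$, I would note that for any real phases $\theta_1,\dots,\theta_n$ the matrix $\diag(e^{i\theta_1},\dots,e^{i\theta_n})K=\big(\diag(e^{i\theta_1},\dots,e^{i\theta_n})u\big)v^T$ is again rank one, so by the trace fact its spectral radius is $\big|\sum_{i=1}^n e^{i\theta_i}u_iv_i\big|$. Since $u,v$ are real, choosing $\theta_i\in\{0,\pi\}$ so that $e^{i\theta_i}u_iv_i=|u_iv_i|$ for every $i$ makes this sum equal to $\sum_i|u_i||v_i|=s$; as $\rho_0$ is a maximum over all phases, $\rho_0(K)\ge s$. (The triangle inequality shows this phase choice is in fact optimal, but only the inequality is needed here.)

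For the upper bound on $\rho_1$, I would observe that for any $\Delta=\diag(d_1,\dots,d_n)\in\mathcal{D}_{n,+}$ the conjugate $\Delta K\Delta^{-1}=(\Delta u)(\Delta^{-1}v)^T$ is rank one, whence by the norm fact $\|\Delta K\Delta^{-1}\|=\|\Delta u\|_2\,\|\Delta^{-1}v\|_2=\big(\sum_i d_i^2u_i^2\big)^{1/2}\big(\sum_j d_j^{-2}v_j^2\big)^{1/2}$. Taking $d_i=\sqrt{|v_i|/|u_i|}$ on indices where $u_i,v_i$ are both nonzero collapses each factor to $s$, giving $\|\Delta K\Delta^{-1}\|=s$ and hence $\rho_1(K)=\inf_\Delta\|\Delta K\Delta^{-1}\|\le s$. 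Combining the three inequalities yields $s\le\rho_0(K)\le\rho_1(K)\le s$, forcing equality throughout.

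The main difficulty I anticipate is the presence of indices with $u_i=0$ or $v_i=0$, where the minimizing $\Delta$ above degenerates: if $u_i=0\ne v_i$ one must send $d_i\to\infty$, and if $v_i=0\ne u_i$ one must send $d_i\to0$, in order to suppress the offending term. Since $\rho_1$ is defined as an infimum over $\mathcal{D}_{n,+}$, it is enough to exhibit a sequence $\Delta^{(\epsilon)}\in\mathcal{D}_{n,+}$ along which $\|\Delta^{(\epsilon)}K(\Delta^{(\epsilon)})^{-1}\|\to s$; the terms with a vanishing $u_i$ or $v_i$ contribute nothing to $s$ anyway, so the limit is exactly $s$ and the upper bound survives. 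As an alternative that avoids relying on the stated inequality $\rho_0\le\rho_1$, the lower bound $\rho_1(K)\ge s$ follows directly by applying the Cauchy--Schwarz inequality to the two factors $\big(\sum_i d_i^2u_i^2\big)^{1/2}$ and $\big(\sum_j d_j^{-2}v_j^2\big)^{1/2}$.
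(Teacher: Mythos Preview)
Your proof is correct and follows the same overall sandwich strategy as the paper, namely $s\le\rho_0(K)\le\rho_1(K)\le s$ with the middle inequality taken from the cited literature. The two arguments differ, however, in how the outer bounds are obtained. For the upper bound $\rho_1(K)\le s$, the paper does not build a minimising $\Delta$ at all: it simply observes that $s=\rho(|K|)$ and invokes the known inequality $\rho_1(K)\le\rho(|K|)$ from \cite[Proposition~3.2]{bastin2016stability}. Your route is more self-contained---you avoid an external citation by exhibiting (a sequence approximating) the optimal scaling $d_i=\sqrt{|v_i|/|u_i|}$, at the cost of having to treat the degenerate indices $u_i=0$ or $v_i=0$ by a limiting argument. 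For the lower bound on $\rho_0$, both proofs identify the spectral radius of the rank-one perturbation as $\bigl|\sum_j e^{i\theta_j}u_jv_j\bigr|$; your phase choice $\theta_j\in\{0,\pi\}$ is arguably cleaner than the paper's, which aligns all terms along the direction $e^{i\pi/4}$. Either way the value $s$ is attained, and the triangle inequality (which you note) shows it cannot be exceeded.
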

\begin{proof}
    We start by proving the equality of $\rho_0(K)=|v|^T|u|$. We obtain
    \begin{align*}
        \rho_0(K)&=\max\{\rho( (\text{diag}(e^{i\theta_1},\dots, e^{i\theta_n})u v^T)); (\theta_1,\dots,\theta_n)^T\in\R^n\}\\
        &=\max\{| v^T\text{diag}(e^{i\theta_1},\dots, e^{i\theta_n})u|; (\theta_1,\dots,\theta_n)^T\in\R^n\}\\
        &=\max\left\{  \left(\left(\sum_{j=1}^n v_j u_j\cos(\theta_j)\right)^2+\left(\sum_{j=1}^n v_j u_j\sin(\theta_j)\right)^2\right)^{\frac12}; (\theta_1,\dots,\theta_n)^T\in\R^n\right\}\\
        &= \left(\left(\sum_{j=1}^n |v_j u_j|\frac{1}{\sqrt{2}}\right)^2+\left(\sum_{j=1}^n |v_j u_j|\frac{1}{\sqrt{2}}\right)^2\right)^{\frac12}\\
        &=\sum_{j=1}^n |v_j u_j|=|v|^T|u|.
    \end{align*}
    Note that we have $|v|^T|u|=\rho(|K|)$ (since the only eigenvalues of $|K|$ are $0$ and $|v|^T|u|$).
    By \cite[Proposition 3.2]{bastin2016stability} we know $\rho_1(K)\leq \rho(|K|)$. Hence, with the inequality $\rho_0(K)\leq \rho_1(K)$ the statement follows.
\end{proof}
Due to this lemma and corollary \ref{cor:rho0}, we have proven the following theorem
\begin{theorem}\label{cor:pos}
    Let $\KL=uv^T$ be of rank one, $\A-\Ko$ is invertible and let the characteristic velocities be positive. The system \eqref{eq:Riemann_sys} with boundary conditions \eqref{eq:bc_res_pos} is robustly exponentially stable if and only if
    \begin{equation}
        |v^T T|\cdot |((\A-\Ko) T)^{-1} u|<1.
    \end{equation}
\end{theorem}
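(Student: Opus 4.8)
The plan is to reduce everything to the explicit rank-one evaluation already packaged in Lemma~\ref{lem:1}. Recall that in the positive-velocity case the text has reduced robust exponential stability, via \eqref{eq:cond_pos} (equivalently Corollary~\ref{cor:rho0}), to the single scalar condition $\rho_0(K)<1$ with $K=((\A-\Ko)T)^{-1}\KL T$. So the entire task is to compute $\rho_0(K)$ in closed form under the hypothesis $\KL=uv^T$.

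First I would substitute $\KL=uv^T$ into $K$ and use associativity of matrix multiplication to regroup the product so that $K$ is itself exhibited as a rank-one matrix. Writing $a:=((\A-\Ko)T)^{-1}u\in\R^n$ and $b^T:=v^T T$, one gets $K=((\A-\Ko)T)^{-1}u\,v^T T=a\,b^T$. Before invoking Lemma~\ref{lem:1} I would check its nonzero hypotheses: since $u\neq 0$ and $(\A-\Ko)T$ is invertible (both $\A-\Ko$ and $T$ being invertible), we have $a\neq 0$; and since $v\neq 0$ and $T$ is invertible, $b=T^Tv\neq 0$. Thus $K=ab^T$ is a genuine rank-one matrix with nonzero generating vectors, exactly the setting of the lemma.

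Then Lemma~\ref{lem:1} gives $\rho_0(K)=\rho_1(K)=|b|^T|a|$, and unwinding the notation (so that $|b|^T=|v^TT|$ and $|a|=|((\A-\Ko)T)^{-1}u|$, both read entrywise) yields $\rho_0(K)=|v^TT|\cdot|((\A-\Ko)T)^{-1}u|$. Combining this identity with the iff characterization $\rho_0(K)<1$ from \eqref{eq:cond_pos} immediately delivers the stated equivalence.

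There is no deep obstacle here; the statement is a direct specialization of the preceding machinery. The only two points requiring genuine care are: (i) performing the associative regrouping so that the \emph{same} $K$ appearing in \eqref{eq:cond_pos} is displayed in the rank-one form $ab^T$ that Lemma~\ref{lem:1} expects, matching $u\mapsto a$ and $v\mapsto b$; and (ii) interpreting the bars in the theorem statement as entrywise moduli, so that $|v^TT|\cdot|((\A-\Ko)T)^{-1}u|$ denotes the inner product $\sum_j |b_j|\,|a_j|$ (with $a,b$ as above) rather than a product of induced matrix norms.
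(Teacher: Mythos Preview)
Your proposal is correct and follows exactly the route the paper takes: the paper's own proof is the single sentence ``Due to this lemma and corollary~\ref{cor:rho0}, we have proven the following theorem,'' and you have simply unpacked that sentence by exhibiting $K=((\A-\Ko)T)^{-1}\KL T$ in the rank-one form $ab^T$ and applying Lemma~\ref{lem:1}. Your explicit verification that $a$ and $b$ are nonzero (so that Lemma~\ref{lem:1} applies) and your remark on the entrywise reading of the absolute values are careful touches that the paper leaves implicit.
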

This theorem gives a necessary and sufficient condition for a system to be robustly exponentially stable. 
Note that exponential stability is still possible if a system is not robustly exponentially stable, but by Lemma \ref{lem:1}, we are not able to draw any further conclusions using Theorem \ref{thm:exp}.
\begin{remark}
    Similar results can be obtained for only negative velocities and for the case in which $M$ in \eqref{eq:state_sys} is already a diagonal matrix.
    For simplicity, let $M=\Lambda$, then we have $Y=R$ and we can replace $R(t,x)$, if necessary, by
    \[\begin{pmatrix}
        R_+(t,x)\\
        R_-(t,L-x)
    \end{pmatrix},\]
    so that the characteristic velocities are now all positive.
    The results derived above follow immediately with the boundary conditions \eqref{eq:bc_res_pos}.
\end{remark}
\begin{example} We consider the following system which is already in Riemann coordinates
$$ \partial_t Y(t,x)+\begin{pmatrix}
    \lambda_1 & 0 \\
    0 & -\lambda_2
\end{pmatrix}\partial_x Y(t,x)=0,$$
$\lambda_1,\lambda_2>0$ and the boundary conditions given by \eqref{eq:physicalbc} with 
$$\A=\begin{pmatrix}
    1&0\\0&0
\end{pmatrix}\quad\text{and}\quad \B=\begin{pmatrix}
    0&0\\0&1
\end{pmatrix}.$$
With $\tilde Y(t,x):=(Y_1(t,x),Y_2(t,L-x))^T$ and $\tilde \A:=Id_2$ we can transform the system to a problem with only positive eigenvalues $\lambda_1$ and $\lambda_2$. 
Let only the first variable be observable. Hence, the control laws are
$$\Ko=\begin{pmatrix}
    \Koo&0\\ \Kto&0
\end{pmatrix}\quad\text{and}\quad \KL=\begin{pmatrix}
    \KLoo &0\\ \KLto&0
\end{pmatrix}.$$
By theorem \ref{cor:pos} the system is robustly exponentially stable if and only if
\[ \left| \frac{\KLoo}{1-\Koo}\right|<1.\]
Interestingly, there is no condition on the parameters $\KLto$ and $\Kto$ which influence the non-observable variable.
\end{example}

\subsection{Local controls}
Now let us return to the general case.
We assume that we only have local physical constraints and controls.
For example, at $x=0$ the physical constraints are only influenced by the state variables at $x=0$ and also only controlled by the variables at $x=0$.
This situation occurs in many applications where only local information is available.
One possible choice to model such local control is that $\Ato$, $\Att$, $\Boo$, $\Bot$, $\Kto,\ \Ktt,\ \KLoo$ and $\KLot$ are all zero matrices.
This results in a simpler structure of the matrix we need to check for stability, i.e.,
this results in a block anti-diagonal matrix for which we need to check for stability, i.e.,
\begin{align}\label{eq:bc_approach2}
&\begin{pmatrix}
R^+(t,0) \\
R^-(t,L)
\end{pmatrix}
=\begin{pmatrix}
0&  \mathcal{C}_{1,1}^{-1}\mathcal{D}_{1,2}\\
\mathcal{C}_{2,2}^{-1}\mathcal{D}_{2,1}& 0
\end{pmatrix}
\begin{pmatrix}
R^+(t,L) \\
R^-(t,0)
\end{pmatrix}.
\end{align}
\begin{remark}
 Note that even if $n$ is even and $m=n/2$, i.e., we have the same number of positive and negative eigenvalues, no identity matrices result from the multiplication of the block matrices, since $\Too\neq\Tot$ and $\Tto\neq\Ttt$ hold (since the inverse of $T$ exists).
\end{remark}
Especially for the case $n=2$, the above representation is very helpful, because the inverses are given by scalars and the stability conditions are easy to check.
In fact, all stability conditions introduced so far lead to the same result:
\begin{proposition}\label{prop:n2}
    Let $a,b\in\R$ and $$K=\begin{pmatrix}
0 & a\\
b & 0
\end{pmatrix}.$$
Then, we have
$$ \rho_0(K)=\rho_1(K)=\sqrt{|ab|}.$$
\end{proposition}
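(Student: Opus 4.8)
The plan is to prove both equalities by a sandwich argument built on the general inequality $\rho_0(K)\le\rho_1(K)$ recalled earlier in Section~\ref{sec:PrelResults}. Since that inequality is free, it suffices to compute $\rho_0(K)$ exactly from below and to produce a single diagonal scaling that realises the same value as an upper bound for $\rho_1(K)$; the two bounds then pinch together. Computing $\rho_0$ is immediate, so the real work — and the main obstacle — lies in the explicit construction bounding $\rho_1$ from above.

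First I would compute $\rho_0(K)$ directly from its definition. For arbitrary phases $\theta_1,\theta_2$ one has $\diag(e^{i\theta_1},e^{i\theta_2})K=\begin{pmatrix}0 & a\,e^{i\theta_1}\\ b\,e^{i\theta_2} & 0\end{pmatrix}$, whose characteristic polynomial is $\lambda^2-ab\,e^{i(\theta_1+\theta_2)}$. Its eigenvalues are therefore $\pm\sqrt{ab}\,e^{i(\theta_1+\theta_2)/2}$, each of modulus $\sqrt{|ab|}$, so the spectral radius equals $\sqrt{|ab|}$ independently of $(\theta_1,\theta_2)$. Taking the maximum over all phases gives $\rho_0(K)=\sqrt{|ab|}$.

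Next I would bound $\rho_1(K)$ from above by exhibiting an explicit $\Delta=\diag(\delta_1,\delta_2)\in\mathcal{D}_{2,+}$. A short computation gives $\Delta K\Delta^{-1}=\begin{pmatrix}0 & a\,\delta_1/\delta_2\\ b\,\delta_2/\delta_1 & 0\end{pmatrix}$, and the Euclidean (operator) norm of an anti-diagonal $2\times2$ matrix is the larger of the moduli of its two entries, since these are precisely its singular values. Writing $t=\delta_1/\delta_2>0$, the norm becomes $\max\!\big(|a|\,t,\ |b|/t\big)$; balancing the two terms at $t=\sqrt{|b|/|a|}$ yields the common value $\sqrt{|ab|}$. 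Hence $\rho_1(K)\le\sqrt{|ab|}$, and combining with $\sqrt{|ab|}=\rho_0(K)\le\rho_1(K)$ forces $\rho_0(K)=\rho_1(K)=\sqrt{|ab|}$.

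The only genuinely delicate point is the degenerate case $a=0$ or $b=0$, where the balancing choice $t=\sqrt{|b|/|a|}$ breaks down. There $K$ is nilpotent, so $\rho_0(K)=0=\sqrt{|ab|}$, and letting $t\to\infty$ (if $a=0$) or $t\to0$ (if $b=0$) drives $\max(|a|t,|b|/t)$ to $0$, so the infimum defining $\rho_1(K)$ is again $0$. Thus the identity persists in all cases, and notably no step relies on the deeper equality $\rho_0=\rho_1$ for $n\le5$ quoted earlier, which keeps the argument fully self-contained.
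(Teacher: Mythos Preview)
Your proof is correct and takes a genuinely different route from the paper. The paper's own proof simply invokes the general equality $\rho_0(K)=\rho_1(K)$ for $n\le 5$ from \cite[Theorem~3.12]{bastin2016stability} and then computes $\rho_0(K)$ by a ``straightforward computation''. You instead compute $\rho_0(K)$ explicitly, then exhibit a concrete diagonal scaling $\Delta$ (balancing $t=\delta_1/\delta_2$ at $\sqrt{|b|/|a|}$) that realises $\|\Delta K\Delta^{-1}\|=\sqrt{|ab|}$, and close the sandwich via the elementary inequality $\rho_0\le\rho_1$. What your approach buys is self-containment: you do not rely on the $n\le 5$ result, which is a nontrivial external theorem, and you make the minimiser in the definition of $\rho_1$ completely explicit. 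What the paper's approach buys is brevity---one line plus a citation. Your handling of the degenerate case $ab=0$ is also a useful addition, since the balancing choice of $t$ is undefined there and the infimum in $\rho_1$ is not attained; the paper does not comment on this edge case.
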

\begin{proof}
    The equality $\rho_1(K)=\rho_0(K)$ is given in \cite[Theorem 3.12]{bastin2016stability}. Further, straight-forward computations show that $\rho_0(K)=\sqrt{|ab|}$.
\end{proof}
In the following, boundary stabilization will be applied to an example with $n = 2$.

\subsubsection{Density-Flow Systems}
One prominent example for a linear system with $n=2$ is the so called density-flow system, i.e., 
\begin{equation}\label{eq:densityflow}
   \begin{aligned}
\partial_t H+\partial_x Q&=0\\
\partial_t Q+\lambda_1\lambda_2\partial_x H+(\lambda_1-\lambda_2)\partial_x Q&=0
\end{aligned} 
\end{equation}
with $\lambda_1,\ \lambda_2>0$ and boundary conditions on the flux $Q$, i.e.,
\[ Q(t,0)=U_0(t)\qquad Q(t,L)=U_L(t),\]
for some suitable control laws $U_0$ and $U_L$.
The eigenvalues of the system are given by $\lambda_1$ and $-\lambda_2$ and the matrix $T$ by
$$T=\frac{1}{\lambda_1+\lambda_2}\begin{pmatrix}
    1 & -1\\ \lambda_1&\lambda_2
\end{pmatrix}.$$
Such a system can be used to represent many physical systems, for example, it is a valid approximate linearized model for the motion of liquid fluids
in pipes.
For more details, refer to \cite[Section 2.2]{bastin2016stability} and \cite[Chapter 2]{nicolet2007hydroacoustic}.\\
The boundary conditions \eqref{eq:physicalbc} can be obtained after linearizing around a constant equilibrium $H^*$ and $Q^*$ and applying linear feedback law, such that we obtain
\begin{align*}
Q(t,0)-Q^*&=\Koo (H(t,0)-H^*)+\Kot (Q(t,0)-Q^*)\\
Q(t,L)-Q^*&=\KLto (H(t,L)-H^*)+\KLtt (Q(t,L)-Q^*).
\end{align*}
Note that the controls are only scalars.\\
If both variables at each boundary are observable, the system is robustly exponentially stable (see proposition \ref{prop:n2} and corollary \ref{cor:rho0}) if and only if
\begin{align*}
\rho_0(K)^2=\left| \frac{(-\Koo+(\Kot-1)\lambda_2)(\KLto+(\KLtt-1)\lambda_1)}{(-\Koo+(1-\Kot)\lambda_1)(\KLto+(1-\KLtt)\lambda_2)}\right|<1.
\end{align*}
In case, we only observe the flow $Q$ at the boundaries, the system is not robustly exponentially stable as
\begin{align*}
\rho_0(K)^2=\left| \frac{(\Kot-1)\lambda_2(\KLtt-1)\lambda_1}{(1-\Kot)\lambda_1(1-\KLtt)\lambda_2}\right|=1.
\end{align*}
Hence, a restricted observability can make a robust feedback control impossible.
%it impossible to stabilize a system towards its equilibrium.
In fact, it depends on observing the correct boundaries, as observing only $H$ gives us
\begin{align*}
\left| \frac{(-\Koo-\lambda_2)(\KLto-\lambda_1)}{(-\Koo+\lambda_1)(\KLto+\lambda_2)}\right|<1
\end{align*}
as a stability condition. Interestingly, it is sufficient to only observe one of the two boundaries, for example, for observing only the boundary at $x=0$ stability is given for
\begin{align*}
\left| \frac{(-\Koo-\lambda_2)\lambda_1)}{(-\Koo+\lambda_1)\lambda_2)}\right|<1.
\end{align*}
Let us consider a tuning parameter with $\Koo<0$ as an example. For $\Koo\in [-\lambda_2,0)$ the system is robustly exponentially stable. 
If the eigenvalues additionally satisfy $\lambda_1<\lambda_2$, we can choose any $\Koo<0$ and the system remains robustly exponentially stable.

\subsubsection{Numerical example}
To further illustrate the results for the density-flow system, we will consider a numerical example.
We choose the initial conditions
\[H_0(x)=2.5,\qquad Q_0(x)=4\sin(\pi x).\]
An equilibrium for the system \eqref{eq:densityflow} is given by an arbitrary constant state $H^*$ and $Q^*$. Here we choose 
\[H^*=2,\qquad Q^*=3.\]
We also set $\lambda_1=1<\lambda_2=2$.
We solve the problem on the interval $[0,1]$.
The solutions are approximated numerically by solving the system in Riemann coordinates with a simple upwind scheme.
%\jan{We can also use a Lax-Friedrichs-type scheme for the physical variables and a characteristic decomposition for the boundary values. I have the code for this, too.}
The spatial step size is set to $\Delta x=0.01$ and the temporal step size is set to $0.75\Delta t\leq \Delta x/\max\{\lambda_1,\lambda_2\}$ according to the CFL condition.
We compute the discrete $L^2-$norm of the difference between the physical variables and  their equilibrium for different parameter settings. In particular, we assume that the flux is unobservable and consider a control for the height only at $x=0$, i.e. $\Kot=\KLto=\KLtt=0$.
From the analysis in the previous subsection, we know that robust stabilization is guaranteed for any $\Koo<0$.
In addition, if neither the height nor the flux is observable, the system is not robustly exponentially stable since $\rho_0(K)=1$.
Therefore, we study $\Koo\in\{0.01,0,-0.01,-1,-100\}$.
The $L^2-$norm in figure \ref{fig:L2densityflow} shows that exponential stability can be obtained for the negative values of $\Koo$.
\begin{figure}
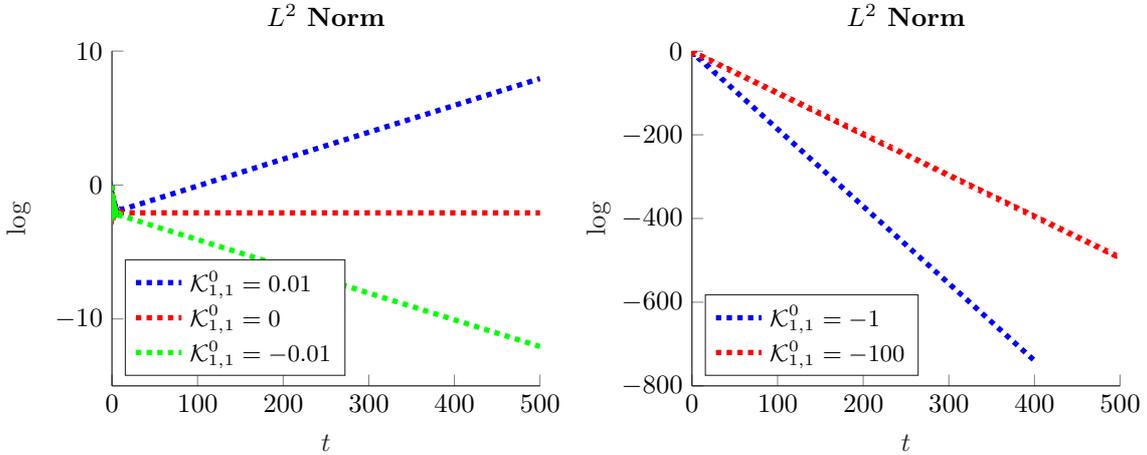

    \centering
    \setlength{\fwidth}{0.37\textwidth}
    \input{L2NormDF1}
    \input{L2NormDF2}
    \caption{Discrete $L^2-$norm on a logarithmic scale for $\Koo\in\{0.01,0,-0.01\}$ (left) and $\Koo\in\{-1,-100\}$ (right).}
    \label{fig:L2densityflow}
\end{figure}
For the first two cases, the system is not only robustly exponentially stable (as known from the analytical results), but also not exponentially stable at all.
Note that observing the flow always leads to the same boundary conditions in Riemann coordinates, since they depend only on $\lambda_1$ and $\lambda_2$. Thus the example above for $\Koo=0$ covers all possible cases. In particular, the system here stabilizes to a different steady state.

\subsection{Balance laws}
The previous approaches can be easily extended to linear balance laws, i.e.,
\begin{equation}\label{eq:balancelaw}
\partial_t Y(t,x)+ M\partial_x Y(t,x)+NY(t,x)=0,\quad t>0,\ x\in [0,L]
\end{equation}
with $N\in\R^{n\times n}$.
Here, we obtain in Riemann coordinates,
\begin{equation}\label{eq:balancelawRC}
\partial_t R(t,x)+\Lambda \partial_x R(t,x) +S R(t,x)=0,\quad t>0,\ x\in [0,L]
\end{equation}
with $S=T^{-1} N T$.
The exponential stability in the $L^2$ norm now involves additional conditions for the source term.
To the best of the authors' knowledge, only sufficient conditions for exponential stability are available.
Here, we recall the following result from \cite{bastin2016stability}:
\begin{theorem}\label{thm:bl}(\cite[Theorem 5.4]{bastin2016stability})
     If there exists $P=\diag(p_1,\ldots,p_n)$ with $p_i>0$ for $i=1,\ldots,n$, such that
\[S^T P +P S\text{ is positive semi-definite}\]
and 
\[\Vert \Delta K \Delta^{-1} \Vert <1 \]
with $\Delta :=\sqrt{P|\Lambda|}$, then the system \eqref{eq:balancelawRC} with boundary conditions \eqref{eq:bc} is exponentially stable in the sense of Definition \ref{def:expstable} (i).
\end{theorem}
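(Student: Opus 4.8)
The plan is to construct a weighted quadratic Lyapunov functional and show that it decays exponentially along the flow. Guided by the two hypotheses --- one involving $\Delta=\sqrt{P|\Lambda|}$ and one involving $P$ itself --- I would take
\[
V(t)=\int_0^L R(t,x)^T P\,E_\mu(x)\,R(t,x)\,dx,\qquad E_\mu(x)=\diag\!\big(e^{-\mu\,\mathrm{sgn}(\lambda_i)\,x}\big),
\]
with a small parameter $\mu>0$ fixed at the end; since $E_\mu\to I$ as $\mu\to0$, $V$ is equivalent to the squared $L^2$ norm. Differentiating in time and using $\partial_tR=-\Lambda\partial_xR-SR$ from \eqref{eq:balancelawRC}, then integrating the transport part by parts (note $P E_\mu\Lambda$ is diagonal, hence symmetric), gives
\[
\dot V=\underbrace{-\big[R^TP E_\mu\Lambda R\big]_0^L}_{=:\,\Theta}+\int_0^L R^T\big(PE_\mu'\Lambda-PE_\mu S-S^TE_\mu P\big)R\,dx .
\]

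First I would treat the boundary term $\Theta$, where the condition $\|\Delta K\Delta^{-1}\|<1$ enters. Writing $R_{\mathrm{in}}=(R^+(t,0),R^-(t,L))$ for the incoming and $R_{\mathrm{out}}=(R^+(t,L),R^-(t,0))$ for the outgoing traces and using $\mathrm{sgn}(\lambda_i)\lambda_i=|\lambda_i|$, the quadratic form $\Theta$ equals (at leading order in $\mu$, the factors $E_\mu(L)$ being an $O(\mu)$ perturbation)
\[
\Theta=R_{\mathrm{in}}^T\Delta^2R_{\mathrm{in}}-R_{\mathrm{out}}^T\Delta^2R_{\mathrm{out}},\qquad \Delta^2=P|\Lambda| .
\]
Substituting the boundary law $R_{\mathrm{in}}=K R_{\mathrm{out}}$ from \eqref{eq:bc} and conjugating by the invertible diagonal matrix $\Delta$,
\[
\Theta=\big(\Delta R_{\mathrm{out}}\big)^T\!\big((\Delta K\Delta^{-1})^T(\Delta K\Delta^{-1})-I\big)\big(\Delta R_{\mathrm{out}}\big)\le\big(\|\Delta K\Delta^{-1}\|^2-1\big)\,|\Delta R_{\mathrm{out}}|^2\le0 ,
\]
which is strictly negative unless $R_{\mathrm{out}}=0$; by continuity the sign persists through the $O(\mu)$ corrections once $\mu$ is small. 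This is a short, self-contained linear-algebra step and explains the precise role of $\Delta=\sqrt{P|\Lambda|}$.

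Next comes the interior integral. The weight gradient gives $PE_\mu'\Lambda=-\mu P|\Lambda|E_\mu$, strictly negative definite, which supplies the dissipation $-\mu\int_0^L R^TP|\Lambda|E_\mu R\le-\mu\lambda_{\min}V$ (with $\lambda_{\min}:=\min_i|\lambda_i|>0$). The source contributes $-\int_0^L R^T(PE_\mu S+S^TE_\mu P)R$; as $\mu\to0$ one has $E_\mu\to I$, so this tends to $-\int_0^L R^T(S^TP+PS)R\le0$ by the semidefiniteness hypothesis. The aim is to combine these so that, for $\mu$ small enough, the right-hand side $\Theta+\int_0^L R^T\big(PE_\mu'\Lambda-PE_\mu S-S^TE_\mu P\big)R\,dx$ of the identity for $\dot V$ is $\le-\nu V$ for some $\nu>0$; exponential stability in the sense of Definition \ref{def:expstable}(i) then follows from Grönwall's inequality together with the equivalence of $V$ and $\|\cdot\|_{L^2}^2$.

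I expect this last step --- controlling the interior source term --- to be the main obstacle. The weight $E_\mu$ must vary in $x$ for $PE_\mu'\Lambda$ to be dissipative, yet stay near the identity so that the hypothesis on $S^TP+PS$, rather than on the conjugated matrix $PE_\mu S+S^TE_\mu P$, is usable. Because $E_\mu-I=O(\mu)$, the indefinite perturbation $PE_\mu S+S^TE_\mu P-(S^TP+PS)$ and the stabilizing gradient term are both of order $\mu$; on the kernel of $S^TP+PS$ the semidefinite part contributes nothing and the decay must be extracted solely from the gradient term, so a crude norm bound on the perturbation does not suffice. This quantitative balancing under exactly the stated hypotheses is the content of \cite[Theorem 5.4]{bastin2016stability}, which I would invoke for the final estimate; the boundary computation above, by contrast, is elementary and robust.
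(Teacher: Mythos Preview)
The paper does not prove this theorem at all: it is stated purely as a citation of \cite[Theorem~5.4]{bastin2016stability}, with no accompanying argument. So there is no ``paper's own proof'' to compare against; the paper simply imports the result.

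Your sketch is the standard weighted-$L^2$ Lyapunov argument that underlies the cited result, and in that sense you give considerably more than the paper does. The boundary computation is correct and cleanly explains why the condition is phrased via $\Delta=\sqrt{P|\Lambda|}$: after conjugation, $\Theta=(\Delta R_{\mathrm{out}})^T\big((\Delta K\Delta^{-1})^T(\Delta K\Delta^{-1})-I\big)(\Delta R_{\mathrm{out}})$, and the strict inequality $\|\Delta K\Delta^{-1}\|<1$ absorbs the $O(\mu)$ weights $e^{\pm\mu L}$ at the endpoints for $\mu$ small. That part is complete.

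One point to flag: your closing paragraph defers the interior estimate back to \cite[Theorem~5.4]{bastin2016stability} itself, which is circular if the aim is an independent proof. You are right that this is the genuine obstacle---the $O(\mu)$ perturbation $P(E_\mu-I)S+S^T(E_\mu-I)P$ competes with the $O(\mu)$ dissipation $-\mu P|\Lambda|E_\mu$, and mere semi-definiteness of $S^TP+PS$ gives no margin on its kernel. A self-contained completion would either (i) observe that on $[0,L]$ one has $\|E_\mu-I\|\le C\mu L$, so the perturbation is bounded by $C'\mu L$, while the gradient term provides $-\mu\,p_{\min}\lambda_{\min}e^{-\mu L}$; this closes only when $L$ is small relative to $P,\Lambda,S$, which is too restrictive; or (ii) use the sharper structural fact exploited in the reference, namely that $E_\mu$ and $P$ commute (both diagonal), so one can write $PE_\mu S+S^TE_\mu P=E_\mu^{1/2}\big(S^TP+PS\big)E_\mu^{1/2}+[\,\cdot\,]$ and control the commutator term more carefully. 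Since the paper itself treats the theorem as a black box, your level of detail already exceeds what is required here; but if you want a standalone proof rather than a sketch, the interior step needs to be closed without appealing to the statement being proved.
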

Obviously, for our purposes, $K$ is given by a suitable transformation of the physical boundary conditions as in \eqref{eq:bc:transformed}.
We will present an example in the following.

\subsubsection{Shallow-Water equations}
Let us consider the Saint-Venant or Shallow-Water equations for an open channel, see e.g. \cite{bastin2008using} or \cite[Section 1.4]{bastin2016stability}.
In particular, we consider the water flow along a prismatic channel with a rectangular cross-section, a length of $L$ units and a constant bottom slope.
The system is given by
\begin{align*}
\partial_t H+\partial_x(HV)&=0\\
\partial_t V+\partial_x\left(\frac{V^2}{2}+gH\right)+\left(C\frac{V^2}{H}-gS_b\right)&=0,
\end{align*}
where the physical quantity $H$ is the water depth and $V$ is the horizontal water velocity.
The parameters for the source term are the constant bottom slope $S_b$, the constant gravity acceleration $g$ and a constant friction coefficient $C$.
We consider a channel on the interval $[0,L]$ and assume that we only use local controls at each gate.
For simplicity, we consider the case that the pool is closed and endowed with pumps at $x=0$ and $x=L$ to impose discharges.
This gives the boundary conditions
\begin{align}\label{eq:bc_SV}
H(t,0)V(t,0)=U_0(H(t,0),V(t,0));\quad H(t,L)V(t,L)=U_L(H(t,L),V(t,L))
\end{align}
with suitable control laws $U_0$ and $U_L$, see \cite[Section 1.4.1]{bastin2016stability}.
Note that we are choosing only local control laws.
We consider the flow in the subcritical regime, i.e.
\begin{align*}
\frac{V(t,x)}{\sqrt{g\, H(t,x)}}<1,
\end{align*}
such that the system is hyperbolic. Linearizing the equations around a suitable constant equilibrium $V^*$ and $H^*$, we obtain \eqref{eq:balancelaw} with
\begin{align*}
Y=\begin{pmatrix}
h\\
v
\end{pmatrix},\quad
M=\begin{pmatrix}
V^*&H^*\\g&V^*
\end{pmatrix},\quad\text{and}\quad
N=\begin{pmatrix}
0&0\\-\frac{C{V^*}^2}{{H^*}^2} & \frac{2CV^*}{H^*}
\end{pmatrix},
\end{align*}
where $h(t,x)=H(t,x)-H^*$ and $v(t,x)=V(t,x)-V^*$. Linearizing the boundary conditions \eqref{eq:bc_SV} and using the linear control given in \eqref{eq:physicalbc}, results in
\begin{align*}
\A =\begin{pmatrix}
V^*&H^*\\ 0&0
\end{pmatrix},\quad
\B=\begin{pmatrix}
0&0\\V^*&H^*
\end{pmatrix}, \quad\Ko=\begin{pmatrix}
\Koo&\Kot\\
0&0
\end{pmatrix}\quad\text{and}\quad \KL\begin{pmatrix}
0&0\\
\KLto&\KLtt
\end{pmatrix}.
\end{align*}
The corresponding matrix $T$ is given by
\[T=\frac12\begin{pmatrix}
\sqrt{\frac{H^*}{g}}&-\sqrt{\frac{H^*}{g}}\\
1&1
\end{pmatrix}\]
with the eigenvalues $\lambda_1=V^*+\sqrt{gH^*}>0$ and $\lambda_2=V^*-\sqrt{gH^*}<0$.
We restrict ourselves to equilibria which fulfill the following additional condition:
\[gS_b{H^*}=C{V^*}^2.\]
In such a way the linearized source term in \eqref{eq:balancelawRC} simplifies to
\begin{align*}
S=\begin{pmatrix}
\gamma &\delta\\ \gamma & \delta
\end{pmatrix}\quad \text{with}\quad \gamma=\frac{C{V^*}^2}{H^*}\left(\frac{1}{V^*}-\frac{1}{2\sqrt{gH^*}}\right)\quad \text{and}\quad \delta=\frac{C{V^*}^2}{H^*}\left(\frac{1}{V^*}+\frac{1}{2\sqrt{gH^*}}\right).
\end{align*}
To fulfill theorem \ref{thm:bl}, we can choose $P=\text{diag}(\delta,\gamma)$ such that the first sufficient condition for stabilization in theorem \ref{thm:bl} is guaranteed.
This enforces $\Delta=\text{diag}(d_1,d_2)$ with $d_1=\sqrt{\delta \lambda_1}$ and $d_2=\sqrt{\gamma |\lambda_2|}$.
We obtain $\Vert \Delta \tilde K \Delta^{-1}\Vert<1$ with $\tilde K$ given by \eqref{eq:bc_approach2} is equivalent to
\begin{align*}
\max\left\{
{\frac{d_1}{d_2}}\left|
\frac{(V^*-\Koo)\sqrt{\frac{H^*}{g}}-(H^*-\Kot)}{(V^*-\Koo)\sqrt{\frac{H^*}{g}}+(H^*-\Kot)}\right|,
{\frac{d_2}{d_1}}\left|\frac{(V^*-\KLto)\sqrt{\frac{H^*}{g}}+(H^*-\KLtt)}{(V^*-\KLto)\sqrt{\frac{H^*}{g}}-(H^*-\KLtt)}\right|
\right\}<1.
\end{align*}
Note that $d_1>d_2$ holds.
For simplicity, let us choose the following ranges for possible controls: $\Koo,\KLto\in(0,V^*)$ and $\Kot,\KLtt\in(0,H^*)$. Both controls need to be chosen such that the condition above is fulfilled. We concentrate on the right boundary first.\\
If we observe both physical variables, we can choose $\KLtt$ or $\KLto$ first, which gives us then a suitable choice for the remaining parameter.
For $\KLto\in(0,V^*)$, we are left with choosing either
\[\KLtt\in\left(H^*-\frac{d_1-d_2}{d_1+d_2}(V^*-\KLto)\sqrt{\frac{H^*}{g}},H^*\right)\]
or, if the set is not empty, e.g. for $\KLto>V^*-\frac{d_1-d_2}{d_1+d_2}\sqrt{gH^*}$,
\[\KLtt\in\left(0,H^*-\frac{d_1+d_2}{d_1-d_2}(V^*-\KLto)\sqrt{\frac{H^*}{g}}\right).\]
On the contrary, by choosing $\KLtt\in(0,H^*)$ first, we can choose either
\[\KLto\in\left(V^*-\frac{d_1-d_2}{d_1+d_2}(H^*-\KLtt)\sqrt{\frac{g}{H^*}},V^*\right)\]
or, again if the set is not empty, e.g. for $\KLtt>H^*-\frac{d_1-d_2}{d_1+d_2}V^*\sqrt{\frac{H^*}{g}}$,
\[\KLto\in\left(0,V^*-\frac{d_1+d_2}{d_1-d_2}(H^*-\KLtt)\sqrt{\frac{g}{H^*}}\right).\]
It can already be noted that in case one control is small the other one needs to be chosen large.
In particular, if we only observe one of the physical quantities, it enforces the following:
\begin{align*}
&\KLto=0\rightarrow\KLtt\in\left(H^*-\frac{d_1-d_2}{d_1+d_2}V^*\sqrt{\frac{H^*}{g}},H^*\right)\\
&\KLtt=0\rightarrow\KLto\in\left(V^*-\frac{d_1-d_2}{d_1+d_2}H^*\sqrt{\frac{g}{H^*}},V^*\right).
\end{align*}
In both cases, the control laws can be chosen such that the exponential stability of the systems can be guaranteed (given a suitable choice for the left boundary values at $x=0$), eventhough the choice is limited.
If $\KLto=\KLtt=0$, the norm is greater than one and stabilization cannot be guaranteed.\\
The left boundary behaves differently. Observing both boundaries, stabilization is guaranteed either by choosing
\[\Koo\in(0,V^*)\text{ and }\Kot\in\left(\max\left\{H^*-\frac{d_1+d_2}{d_1-d_2}\sqrt{\frac{H^*}{g}}(V^*-\Koo),0\right\},H^*-\frac{d_1-d_2}{d_1+d_2}\sqrt{\frac{H^*}{g}}(V^*-\Koo)\right)\]
or
\[\Kot\in(0,H^*)\text{ and }\Koo\in\left(\max\left\{V^*-\frac{d_1+d_2}{d_1-d_2}\sqrt{\frac{g}{H^*}}(H^*-\Kot),0\right\},V^*-\frac{d_1-d_2}{d_1+d_2}\sqrt{\frac{g}{H^*}}(H^*-\Kot)\right).\]
For this boundary, we also obtain the sufficient condition for stabilization, if $\Koo=0$ or $\Kot=0$. In both cases, the other tuning parameter needs to be chosen according to above. In the case $\Koo=\Kot=0$ the sufficient condition for stability is always met.\\
Hence, for this kind of boundary control, i.e., using pumps, it is sufficient to observe only the velocity or the depth of the water at $x=L$ to guarantee exponential stabilization towards the equilibrium.

\subsubsection{Numerical example}
We consider the Shallow-Water equations with the same parameters as used in \cite[Section 4.1]{banda2020numerical}:
The initial conditions are
\[H_0(x)=2.5,\qquad V_0(x)=4\sin(\pi x),\]
with the steady state
\[H^*=2,\qquad V^*=3.\]
We only consider a control at the boundary $x=L$, hence $\Koo=\Kot=0$.
We study different choices for the remaining parameters; see Table \ref{tab:para}.
\begin{table}[ht!]
    \centering
    \begin{tabular}{c|cccccc}
    Parameter-set& 1&2&3&4&5&6\\
    \hline
       $\KLto$  &   0&2.5&2.5&0&0&1\\
        $\KLtt$ &  1.75&0&0.5&0&1&0
    \end{tabular}
    \caption{Choices of boundary conditions. The sets 1-3 guarantee exponential stability.}
    \label{tab:para}
\end{table}
We solve the problem on the interval $[0,1]$.
The spatial step size is set to $\Delta x=0.01$ and the temporal step size is set to $0.75\Delta t\leq \Delta x/\max\{\lambda_1,\lambda_2\}$ according to the CFL condition.
As before, we approximate the solutions (in Riemann coordinates) by an upwind scheme and consider the discrete $L^2-$norm of the difference of the physical variables from their equilibrium.
The results for the different parameter sets can be seen in figure \ref{fig:Lyapunov}.
\begin{figure}[ht!]
    \centering
    \setlength{\fwidth}{0.75\textwidth}
    \input{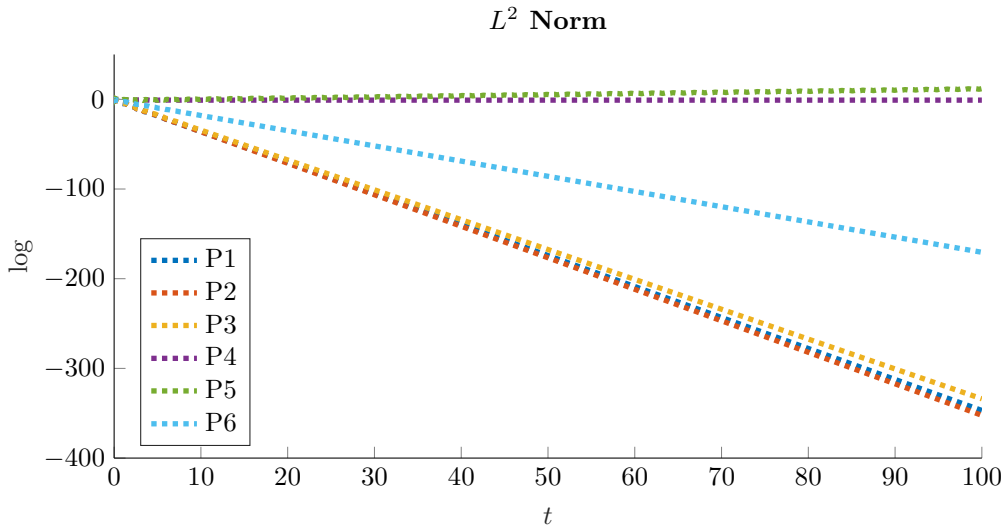}
    \caption{Discrete $L^2-$norm on a logarithmic scale for the different parameter sets of Table \ref{tab:para}}
    \label{fig:Lyapunov}
\end{figure}
It can be seen that the parameter choices one to three give exponential convergence and are in agreement with the theoretical results. 
The rate is very similar, even though the control of the boundary is different (once in velocity, once in depth, once in both).
Interestingly, controlling only one physical variable gives a slightly better rate than controlling two. 
For the parameters of set six, we obtain exponential convergence, too.
In contrast, it is obvious that the parameter sets four and five do not converge to the (correct) equilibrium.

\section{Conclusion}
In this work, we have shown how systems can be stabilized even with limited information. 
This case is of great importance for practitioners, since in many applications only limited information about the boundaries is available.
Furthermore, this information is given in the physical quantities.
Therefore, we decided to start from a boundary control in physical variables and to study the influence of limited information.
We studied linear systems of conservation and balance laws, which can be used to model many physically relevant problems.
In particular, for the case of positive characteristic velocities and the observation of only one physical quantity, we were able to establish a direct formula to compute the restrictions on the tuning parameters.

Future work may involve extending the results obtained here to quasilinear conservation laws, which would include nonconstant steady states.

\section*{Acknowledgments}
M.~H. received funding from the European Union's Horizon Europe research and innovation programme under the Marie Sklodowska-Curie Doctoral Network Datahyking (Grant No. 101072546).
J.~F. is supported by the German Research Foundation (DFG) through SPP 2410 'Hyperbolic Balance Laws in Fluid Mechanics: Complexity, Scales, Randomness' under grant FR 4850/1-1.
M.~K.~B. discloses receipt of the following financial support for the research authorship and publication of this article: this work was supported by the National Research Foundation of South Africa [grant no. CPRR23042296055].
%%%%%%%%%%%%%%%%%%%%%%%%%%%%%%%%%%%%%%

%\begin{thebibliography}{99}
%\bibliographystyle{ws-procs9x6}
\bibliographystyle{siam}
\bibliography{Bib_IncompleteBC}

\end{document}